\documentclass[a4paper]{article}

\usepackage[english]{babel}
\usepackage[T1]{fontenc}
\usepackage[utf8]{inputenc}
\usepackage{lmodern}

\usepackage[a4paper,margin=3cm]{geometry}

\usepackage{amsmath}
\usepackage{graphicx}
\usepackage{amsthm}
\usepackage{amsmath}
\usepackage{amssymb}
\usepackage{amsfonts}
\usepackage{multirow}
\usepackage{enumerate}
\usepackage{hyperref}

\usepackage{lscape}
\usepackage{longtable}
\usepackage{colortbl}
\usepackage{multirow}
\usepackage[table]{xcolor}


\newtheorem{conj}{Conjecture}

\newtheorem{thm}{Theorem}
\newtheorem*{thm*}{Theorem}

\newtheorem{lem}[thm]{Lemma}

\theoremstyle{definition}

\newtheorem{clm}[thm]{Claim}
\newtheorem*{clm*}{Claim}

\newtheorem*{obs*}{Observation}

\newcommand{\flagmatic}{\texttt{flagmatic}}

\newcommand{\abs}[1]{\left\vert#1\right\vert}

\DeclareMathOperator{\dd}{d\!}
\allowdisplaybreaks

\def\D#1#2{\raisebox{-2.5pt}{\includegraphics{D-#1-#2.mps}}}
\def\graf#1{\D{4}{#1}}
\def\result#1#2#3#4#5#6{%
\stepcounter{grafik}\arabic{grafik} & #1 & #3 & #4 & #5 & #6 \\\hline}
\def\ss{$\ \ $}

\def\Ilim#1{I(#1)}

\def\Nnum#1#2{N(#1,#2)}
\def\Gconstr#1{G^{#1}}

\def\Tn#1{T_{#1}}
\def\Sn#1{\overrightarrow{S}_{#1}}

\def\Pn#1{\overrightarrow{P}_{#1}}
\def\Kn#1{\overrightarrow{K}_{#1}}
\def\An#1{I_{#1}}
\def\S1#1{S^1(#1)}
\def\Grand#1{#1^{\mathrm{rand}}}
\def\Treg#1{T_{#1}^{\mathrm{reg}}}

\newcounter{grafik}
\setcounter{grafik}{0}


\long\def\uboundflagmatic#1#2{Upper bound (by \flagmatic{} on #1 vertices): \colorbox{gray!10}{#2}}


\long\def\grafsekcja#1#2#3#4#5#6#7{%

\subsection{Graph #1}\label{subsec:graph#1}

Graph: #2

\medskip
\noindent #3

\medskip
\noindent Lower bound: \colorbox{gray!10}{#4}#5

\medskip
\noindent Construction:

\smallskip
\centerline{\includegraphics[scale=#6]{construction-#1.mps}}
\smallskip

\noindent #7}


\usepackage{listings}
\usepackage{xcolor}

\definecolor{codegreen}{rgb}{0,0.6,0}
\definecolor{codegray}{rgb}{0.5,0.5,0.5}
\definecolor{codepurple}{rgb}{0.58,0,0.82}
\definecolor{backcolour}{rgb}{0.95,0.95,0.92}

\lstdefinestyle{mystyle}{
    backgroundcolor=\color{backcolour},   
    commentstyle=\color{codegreen},
    keywordstyle=\color{magenta},
    numberstyle=\tiny\color{codegray},
    stringstyle=\color{codepurple},
    basicstyle=\ttfamily\footnotesize,
    breakatwhitespace=false,         
    breaklines=true,                 
    captionpos=b,                    
    keepspaces=true,                 
    numbers=left,                    
    numbersep=5pt,                  
    showspaces=false,                
    showstringspaces=false,
    showtabs=false,                  
    tabsize=2
}

\lstset{style=mystyle}

\begin{document}

\title{On the inducibility of oriented graphs on four vertices\thanks{This work was supported by the National Science Centre grant 2016/21/D/ST1/00998.}}

\author{\L{}ukasz Bo\.zyk\thanks{Faculty of Mathematics, Informatics, and Mechanics, University of Warsaw, Banacha 2, 02-097 Warszawa, Poland. E-mail: {\tt l.bozyk@uw.edu.pl}.} 
\and Andrzej Grzesik\thanks{Faculty of Mathematics and Computer Science, Jagiellonian University, {\L}ojasiewicza 6, 30-348 Krak\'{o}w, Poland. E-mail: {\tt Andrzej.Grzesik@uj.edu.pl}.}
\and Bart\l{}omiej Kielak\thanks{Faculty of Mathematics and Computer Science, Jagiellonian University, {\L}ojasiewicza 6, 30-348 Krak\'{o}w, Poland. E-mail: {\tt bartlomiej.kielak@doctoral.uj.edu.pl}.}}

\date{}

\maketitle

\begin{abstract}
We consider the problem of determining the inducibility (maximum possible asymptotic density of induced copies) of oriented graphs on four vertices. 
We provide exact values for more than half of the graphs, and very close lower and upper bounds for all the remaining ones. It occurs that, for some graphs, the structure of extremal constructions maximizing density of its induced copies is very sophisticated and complex.  
\end{abstract}

\section{Introduction}

Fix a~graph $H$ on $k$ vertices. For a~graph $G$, let $\Nnum{H}{G}$ denote the number of induced copies of~$H$ in $G$, i.e., the number of $k$-sets of vertices of $G$ that induce graphs isomorphic to $H$. 
Define 
$$\Ilim{H} = \lim_{n \to \infty} \frac{\max\{\Nnum{H}{G} \,:\, \abs{V(G)} = n\}}{\binom{n}{k}}.$$
We call $\Ilim{H}$ the \emph{inducibility} of a~graph $H$. For $H$ being complete graph or independent set we have $\Ilim{H} = 1$, but in most of the remaining cases the problem of determining $\Ilim{H}$ is open.
The concept of inducibility was introduced in 1975 by Pippenger and Golumbic \cite{PG}, who observed that
$$\Ilim{H} \geq \frac{k!}{k^k - k}$$
for any graph $H$ on $k$ vertices, and conjectured the following:
\begin{conj}\label{conj:Pippenger-Golumbic}
If $H$ is a~cycle of length $k \geq 5$, then $\Ilim{H} = \frac{k!}{k^k-k}$.
\end{conj}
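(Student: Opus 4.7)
\emph{Lower bound.} I would first verify the claimed value via the natural iterated blow-up construction. Partition $n$ vertices into $k$ equal parts $V_1,\ldots,V_k$, place all edges between consecutive $V_i$ and $V_{i+1}$ (indices modulo $k$), leave all other pairs non-adjacent, and recursively apply the same construction inside each $V_i$. Let $f(n)$ denote the number of induced copies of $C_k$. Because $k \geq 5$, no two vertices of $C_k$ share the same neighbourhood, so any $k$-subset meeting the $V_i$'s in a pattern other than ``one vertex per class'' or ``all vertices in a single class'' contains a pair of twins and cannot induce $C_k$. Hence $f(n) = (n/k)^k + k \cdot f(n/k)$, and solving this recurrence yields $f(n) \sim n^k/(k^k-k)$; dividing by $\binom{n}{k}$ gives the conjectured density $k!/(k^k - k)$.

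\emph{Upper bound.} For the matching upper bound I would pursue a stability-plus-induction scheme. The first step is to obtain an approximate upper bound via the flag algebra method, aiming to match the target up to an additive $o(1)$. The second step is a stability statement: every graph whose $C_k$-density is within $\eps$ of $k!/(k^k-k)$ differs by at most $o(n^2)$ edges from some iterated blow-up of $C_k$. The third step exploits self-similarity: a stability argument forces an extremal $G$ to admit a global balanced $k$-partition with the $C_k$-blow-up adjacency pattern between parts, then the same must hold inside each part by optimality, and a direct counting comparison between $G$ and the iterated blow-up closes the induction.

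\emph{Main obstacle.} The principal difficulty is bridging the gap from an approximate bound to the exact expression $k!/(k^k-k)$. Flag algebra certificates are typically irrational and become tight only after a delicate rounding argument; for the cycle family this rounding depends nontrivially on $k$ and is not known to behave uniformly. Equally delicate is the uniqueness question, namely ruling out alternative self-similar constructions --- iterating a different base graph, or an unbalanced iteration --- that might match or even surpass the claimed density. Converting ``approximately a $C_k$-blow-up'' into an exact tight bound is, I expect, precisely the step at which any attempted proof along these lines runs into the reason the conjecture has remained open since 1975.
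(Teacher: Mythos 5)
The statement you were asked about is Conjecture~\ref{conj:Pippenger-Golumbic}: it is an open conjecture, not a theorem, and the paper offers no proof of it --- it only records the lower bound of Pippenger and Golumbic and cites the single resolved case $k=5$ (Balogh, Hu, Lidick\'y, and Pfender, via the undirected $5$-cycle). Measured against that, your proposal contains one genuine piece of mathematics and one programme. The lower-bound half is essentially correct and is exactly the classical Pippenger--Golumbic observation: in the iterated balanced blow-up of $C_k$, a $k$-set inducing $C_k$ must be either a transversal (one vertex per class) or lie entirely in one class, because $C_k$ with $k\geq 5$ contains no set $S$, $2\leq |S|\leq k-1$, all of whose outside vertices are adjacent to all of $S$ or to none of $S$; this yields $f(n)=(n/k)^k+k\,f(n/k)$ and the density $k!/(k^k-k)$. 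That argument is fine, but it only re-derives the known inequality $\Ilim{C_k}\geq k!/(k^k-k)$.

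The gap is the entire upper bound, and it is not a fixable omission in an otherwise complete argument: your ``flag algebras plus stability plus self-similar induction'' outline is a description of the method that succeeded for $k=5$, not a proof for general $k$. No flag algebra certificate is exhibited (and none is known for any $k\geq 6$), no stability statement is proved, and the induction step converting ``close to a blow-up'' into the exact bound is asserted rather than argued; you say as much yourself in the ``main obstacle'' paragraph. In particular, nothing in the proposal rules out competing constructions for large $k$, and the known partial results (Hefetz--Tyomkin, Kr\'al'--Norin--Volec) give upper bounds of the order of a constant times $k!/k^k$ but bounded away from the conjectured constant. So the proposal should be read as a correct lower-bound computation together with a research plan for the hard direction; it does not prove the statement, which remains open for all $k\geq 6$.
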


Since then, many results on inducibility were obtained, including the inducibility of complete bipartite graphs~\cite{BNT, BS, PG}, multipartite graphs~\cite{BEHJ}, and blow-ups of graphs~\cite{HHN}. Recently, also weaker bounds in Conjecture~\ref{conj:Pippenger-Golumbic} were obtained ~\cite{HT, KNV} and the conjecture was proved in the case $k = 5$ by Balogh, Hu, Lidick\'y, and Pfender~\cite{BHLP}.
Inducibility was determined exactly for all graphs on four vertices~\cite{EL, Hi} with a~single exception of the four-vertex path $P_4$, for which we still do not even have a~conjecture about what should be the extremal construction. The best lower bound is $\approx 0.2014$, while the best upper bound $\approx 0.2045$ was obtained by Vaughan using his \flagmatic{} software~\cite{Flagmatic}.

Inducibility can be defined in the same way also for other discrete structures, including hypergraphs, multigraphs, directed graphs, and oriented graphs. In our work we focus on the latter. Recall that by an oriented graph we mean a~graph with directed edges which has no loops, no multiple edges, and no bidirected edges (2-cycles), i.e., a simple graph with an orientation of the edges. 

In general, little is known about the inducibility of oriented graphs. Huang \cite{Hu} determined the inducibility of oriented stars $\Sn{k}$ for all $k \geq 2$ and of complete bipartite digraphs $\Kn{s,t}$ for $2 \leq s \leq t$. Recently, Hu et al. \cite{HMNW} determined the inducibility of all other orientations of stars on at least~$7$ vertices. There is also an analogue of Conjecture~\ref{conj:Pippenger-Golumbic} for oriented graphs:

\begin{conj}\label{conj:directed_cycles}
If $H$ is a~directed cycle of length $k \geq 4$, then $\Ilim{H} = \frac{k!}{k^k-k}$.
\end{conj}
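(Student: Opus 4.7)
The plan is to match a standard construction against an upper bound; the construction is simple, while the matching upper bound is the hard part and is likely accessible only for small $k$, in particular for the case $k=4$ relevant to this paper.

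\emph{Lower bound for all $k \geq 4$.} I would use the iterated balanced blow-up of $\Cn{k}$. Define $G_m$ recursively by letting $G_0$ be a single vertex and $G_m$ be obtained from $\Cn{k}$ by substituting a copy of $G_{m-1}$ for each vertex while preserving all directed edges between blobs. A short case analysis shows that every induced copy of $\Cn{k}$ in $G_m$ either lies inside a single top-level blob, or uses exactly one vertex from each of the $k$ blobs: any other distribution of $k$ chosen vertices among the blobs forces some vertex to have in- or out-degree different from $1$ in the induced subgraph. Writing $f(m)$ for the number of induced $\Cn{k}$ in $G_m$, this gives the recursion
\[
f(m) = k \cdot f(m-1) + k^{(m-1)k}, \qquad f(0) = 0,
\]
whose closed form is $f(m) = \frac{k^{km} - k^m}{k^k - k}$. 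Since $G_m$ has $k^m$ vertices and $\binom{k^m}{k} \sim k^{km}/k!$, the induced density of $\Cn{k}$ in $G_m$ tends to $\frac{k!}{k^k - k}$ as $m \to \infty$.

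\emph{Upper bound for $k=4$.} Here I would follow the strategy used elsewhere in the present paper and apply the flag algebra method of Razborov via the \flagmatic{} software: enumerate all oriented graphs on $N$ vertices for a modest $N$ (say $N=6$ or $7$) and search for a semidefinite certificate that $\Ilim{\Cn{4}} \leq \tfrac{24}{252} = \tfrac{2}{21}$. Matching this with the construction $G_m$ above would settle the conjecture for $k=4$. One should expect the SDP to be close to tight, since the iterated blow-up construction is highly symmetric.

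\emph{Upper bound for general $k$.} This is the main obstacle. The flag algebra approach produces only finitely many bounds, and its computational complexity grows rapidly with $k$; no uniform argument is known. A plausible route would be a stability theorem of the following shape: if an oriented graph $G$ on $n$ vertices satisfies $\Nnum{\Cn{k}}{G}/\binom{n}{k} \geq \frac{k!}{k^k-k} - \eps$, then $V(G)$ admits a partition into $k$ nearly balanced parts $V_1, \dots, V_k$ such that all but $o(n^2)$ edges across parts follow the pattern of $\Cn{k}$. Given such stability, one could recurse inside each part and control the induced $\Cn{k}$-count by induction on the depth of the blow-up. Establishing the stability step itself is expected to be delicate; the difficulty parallels the undirected Pippenger--Golumbic conjecture (Conjecture~\ref{conj:Pippenger-Golumbic}), resolved so far only for $k=5$.
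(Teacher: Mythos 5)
The statement you are addressing is presented in the paper as an open conjecture: no proof is given or claimed there. The paper only records that the conjectured value is attained by the sequence of iterated blow-ups of $\Cn{k}$, that the case $k=4$ was proved by Hu, Lidick\'y, Pfender, and Volec \cite{HLPV}, and that $k=5$ follows from the undirected five-cycle result \cite{BHLP}. Measured against that, your proposal is honest and essentially correct about the state of affairs, but it is not a proof of the conjecture. The part you actually prove is the lower bound $\Ilim{\Cn{k}}\geq \frac{k!}{k^k-k}$: the recursion $f(m)=k\,f(m-1)+k^{(m-1)k}$ with $f(0)=0$, its closed form $f(m)=\frac{k^{km}-k^m}{k^k-k}$, and the case analysis showing that an induced copy of $\Cn{k}$ in the blow-up is either internal to one blob or transversal (any other distribution forces a chosen vertex of in- or out-degree different from $1$, or disconnects the induced subgraph) are all sound, and this is exactly the construction the paper has in mind.

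For the upper bound your plan stops where the open problem starts. For $k=4$ you are proposing to redo what \cite{HLPV} already did; be aware that, as the paper notes in its entry for this graph, their proof of the bound $2/21$ required the flag algebra method \emph{with additional stability arguments} --- a plain semidefinite certificate on $6$ or $7$ vertices is not known to give the exact value, so the stability step is essential, not a refinement one may hope to skip. For general $k$ you correctly identify that no uniform flag-algebra argument exists and that a stability theorem of the shape you sketch is not known; nothing in your proposal supplies it. So the net content of your submission is a correct proof of the lower bound together with a survey of the known cases $k=4,5$, which matches the paper's treatment of the statement as a conjecture rather than a theorem; it would be a gap only if it were presented as a proof, and you should state explicitly that the conjecture remains open for all $k\geq 6$ (and that even for $k=4,5$ your text defers to the cited results rather than proving them).
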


The conjectured value is achieved in the sequence of iterated copies of a directed $k$-cycle. 
The case $k=4$ was proved by Hu, Lidick\'y, Pfender, and Volec~\cite{HLPV}, who also determined inducibility of all other orientations of $C_4$. Note that the case $k=5$ follows from the result on undirected cycles~\cite{BHLP}. 
In \cite{CLP} Choi, Lidick\'y, and Pfender made a similar conjecture, that inducibility of a directed path on $k$ vertices is also achieved in the sequence of iterated copies of a directed $k$-cycle. 

The inducibility of all oriented graphs on $3$ vertices is known. For directed triangle it is $1/4$ and is achieved by a sequence of any regular tournaments. The inducibility of a graph with one edge is $3/4$, since it is the same as the inducibility of unoriented complement of $K_{1,2}$ solved in~\cite{PG}, and is attained by a sequence of unions of two isolated arbitrary tournaments of equal size. The inducibility of a directed star was determined by Huang~\cite{Hu} to be $2\sqrt{3}-3$ and is achieved in an iterative construction. The case of a path on $3$ vertices was announced in~\cite{CLP} to be solved by Hladk\'y, Kr\'al' and Norin---the inducibility is equal to $2/5$ and is achieved in the sequence of iterated copies of a directed $4$-cycle.

In this work, we consider the inducibility of all oriented graphs on four vertices. Up to isomorphism, there are 42 such graphs. Since $\Ilim{H} = \Ilim{\overleftarrow{H}}$, where $\overleftarrow{H}$ is the~graph obtained from $H$ by reversing all arcs, the number of non-isomorphic cases to consider can be reduced to 30. 

In Section \ref{sec:graphs} we present upper bounds and constructions providing lower bounds for all oriented graph on four vertices. 
Few of them follow from the results on the inducibility of undirected graphs \cite{EL}, whereas star~\cite{Hu}, tournaments~\cite{BLPP} and orientations of $C_4$~\cite{HLPV} were proven earlier. 
In the remaining cases, the upper bounds were obtained mostly using \flagmatic{} software \cite{Flagmatic} based on the flag algebra method of Razborov~\cite{Raz07}. 
In several cases, presented constructions give lower bounds that are matching the upper bounds, while in the remaining ones, when the construction is complex, the lower bounds differ by at most $0.004$ in one case, and by at most $0.001$ in all the other cases. This indicates that the constructions might be correct and the applied flag algebras computations were not enough to obtain the matching upper bounds. 
All of the results are summarized in Table~\ref{tab:summary}. Whenever the constant is irrational, it is defined in the appropriate subsection of Section \ref{sec:graphs}. Description of the used notation and explanations of the pictograms applied to illustrate the constructions are contained in Section~\ref{sec:preliminaries}.

It is worth mentioning that the obtained results indicate the structure of constructions giving the inducibility to be far richer than the intuition suggests. Yuster~\cite{Yu} and independently Fox, Huang, and Lee~\cite{FHL} proved that for almost all graphs $H$, the inducibility is attained by the iterated blow-up of~$H$. For small graphs the situation is different. Out of $28$ non-trivial non-isomorphic graphs considered here, only $2$ of them have this property, whereas such constructions as those in Subsections \ref{subsec:graph4}, \ref{subsec:graph20}, or \ref{subsec:graph23} show that the inducibility can be attained by very sophisticated and complex structures. 

\setcounter{grafik}{0}
\def\resultex#1#2#3#4#5#6{%
\stepcounter{grafik}\arabic{grafik} & #1 & #3 & $=$ & #5 & #4 & #6 \\\hline}
\def\ss{$\ \ $}
\def\result#1#2#3#4#5#6{%
\stepcounter{grafik}\arabic{grafik} & #1 & #3 & \cellcolor{black!10}$>$ & #5 & #4 & #6 \\\hline}
\def\ss{$\ \ $}

\begin{table}
\centering
\renewcommand{\arraystretch}{1.65}
\begin{tabular}{|c|c|c|c|ccc|}
\hline
\rowcolor{black!10}
\multirow{2}{*}{Id} & \multirow{2}{*}{$H$, $\overleftarrow{H}$} & \multirow{2}{*}{Upper bound} &  & \multicolumn{3}{c|}{Lower bound} \\\cline{5-7}
\rowcolor{black!10}
\multirow{-2}{*}{Id} &\multirow{-2}{*}{$H$, $\overleftarrow{H}$} & \multirow{-2}{*}{Upper bound}  && Value & Approximation & Construction \\\hline
\resultex{\graf{1}}{--}{1}{1}{$1$}{$\An{n}$}
\resultex{\graf{2}}{7}{$72/125$}{0.576}{$72/125$}{$\An{5}\odot T_n$}
\resultex{\graf{3}}{4}{$3/8$}{0.375}{$3/8$}{$T_n\sqcup T_n$}
\result{\graf{4}\ss\graf{5}}{6}{$\approx$ 0.235046}{0.234309}{$c_4$}{$\Gconstr{4}$}
\result{\graf{6}}{6}{$\approx$ 0.204123}{0.203175}{64/315}{$\overrightarrow{C_4}^{\odot n} \sqcup \overrightarrow{C_4}^{\odot n}$}
\result{\graf{7}}{6}{$\approx$ 0.193552}{0.193548}{$6/31$}{$\overrightarrow{C_5}^{\odot n}$}
\result{\graf{8}\ss\graf{9}}{6}{$\approx$ 0.105867}{0.102124}{$c_7$}{$\Gconstr{7}$}
\resultex{\graf{10}}{6}{81/512}{0.158203}{$81/512$}{$\An{n}\stackrel{3/4}{\to} \An{n}$}
\resultex{\graf{11}\ss\graf{12}}{5}{$c_9$}{0.423570}{$c_9$}{$\Gconstr{9}$}
\resultex{\graf{13}\ss\graf{14}}{5}{$81/400$}{0.2025}{$81/400$}{$\Gconstr{10}$}
\resultex{\graf{141}}{5}{$1/8$}{0.125}{$1/8$}{$T_n^{\mathrm{reg}}\sqcup T_n^{\mathrm{reg}}$}
\resultex{\graf{142}}{4}{$1/2$}{0.5}{$1/2$}{$T_n\sqcup T_n$}
\resultex{\graf{15}}{5}{2/21}{0.095238}{$2/21$}{$\overrightarrow{C_4}^{\odot n}$}
\resultex{\graf{16}}{5}{$3/16$}{0.1875}{$3/16$}{$K_{n,n}^{\mathrm{rand}}$}
\resultex{\graf{17}}{--}{$c_{15}$}{0.189000}{$c_{15}$}{$\Gconstr{15}$}
\resultex{\graf{18}}{5}{$3/8$}{0.375}{$3/8$}{$\overrightarrow{K_{n,n}}$}
\result{\graf{19}\ss\graf{20}}{6}{$\approx$ 0.095640}{0.095238}{2/21}{$H^{\odot n}$}
\result{\graf{21}\ss\graf{22}}{6}{$\approx$ 0.189030}{0.189000}{$c_{18}$}{$\Gconstr{18}$}
\result{\graf{23}\ss\graf{24}}{6}{$\approx$ 0.317681}{0.317678}{$c_{19}$}{$\Gconstr{19}$}
\result{\graf{25}\ss\graf{26}}{6}{$\approx$ 0.119760}{0.119537}{$c_{20}$}{$\Gconstr{20}$}
\resultex{\graf{27}}{5}{$c_{21}$}{0.227173}{$c_{21}$}{$S^1(\frac{9+\sqrt{3}}{26})$}
\result{\graf{28}}{9}{$\approx$ 0.244055}{0.244053}{$c_{22}$}{$\Gconstr{22}$}
\result{\graf{29}\ss\graf{30}}{6}{$\approx$ 0.177784}{0.177630}{$c_{23}$}{$\Gconstr{23}$}
\resultex{\graf{31}\ss\graf{32}}{5}{$3/8$}{0.375}{$3/8$}{$T_n\rightarrow \An{n}$}
\resultex{\graf{33}}{5}{$4/9$}{0.444444}{$4/9$}{$\overrightarrow{C_3}\odot \An{n}$}
\result{\graf{34}\ss\graf{35}}{6}{$\approx$ 0.113205}{0.112567}{$c_{26}$}{$\Gconstr{26}$}
\result{\graf{36}}{6}{$\approx$ 0.148148}{0.148148}{4/27}{$S^1(4/9)$}
	\resultex{\graf{38}\ss\graf{39}}{8}{$c_{28}$}{0.157501}{$c_{28}$}{$\Gconstr{28}$}
\resultex{\graf{37}}{4}{$1/2$}{0.5}{$1/2$}{$S^1(1/2)$}
\resultex{\graf{40}}{--}{1}{1}{$1$}{$T_n$}
\end{tabular}
\caption{Summary of bounds on inducibility of graphs on four vertices.}\label{tab:summary}
\end{table}

\section{Preliminaries}\label{sec:preliminaries}

For a vertex $v\in G$, denote by $d^+(v)$, $d^-(v)$, and $d'(v):=|G|-1-d^+(v)-d^-(v)$ its outdegree, indegree, and nondegree, respectively.

By $\An{n}$, $\Tn{n}$, and $\Treg{n}$ we denote, respectively, the empty graph, the transitive tournament, and any regular tournament on $n$ vertices.
For a graph~$G$, the oriented graph $\Grand{G}$ denotes a~graph obtained from $G$ by orienting every edge independently at random.

By $G \odot H$ we denote the lexicographic product of $G$ and $H$. We also write $G^{\odot n}$ to indicate a~graph $G \odot \ldots \odot G$, an iterated blow-up of $G$.

For any graphs $G$ and $H$, $G \sqcup H$ denotes the graph obtained by taking disjoint union of graphs $G$ and $H$. If $G$ and $H$ are oriented graphs, then we write $G \stackrel{p}{\to} H$ to denote a~graph constructed from $G \sqcup H$ by joining each vertex of~$G$ with each vertex of~$H$ by an arc independently at random with probability~$p$. For $p=1$, we just write $G \to H$.

For any $\alpha \in [0, \frac 12]$, we define a~\emph{circular} graph $\S1{\alpha}$ with vertex set $[0,1)$ and edge from $x$ to $x + a \mod 1$ for each $x \in [0,1)$ and each $a \in [0,\alpha)$.

While depicting constructions corresponding to obtained lower bounds, we use the following conventions. First of all, the illustrations show the limit structure of each construction instead of the finite graphs forming a sequence giving the lower bound. This allows to see the structure of this sequence without caring about getting integer sizes of particular blobs.
To each blob is assigned a real number from the interval $(0,1)$ which corresponds to the fraction of vertices present in that blob in the limit (in a~corresponding $n$-vertex graph, a~blob of size $\alpha$ is assumed to have roughly $\alpha n$ vertices). 
If no blob sizes occur, all parts in the picture are meant to have equal sizes.
Every blob forming an independent set is depicted as a white (empty) circle, while a blob forming an arbitrary tournament is depicted with a gray (shaded) circle. When some other structure appears inside the blob, then there is a letter indicating the structure. In particular, {\bf T} for the transitive tournament and {\bf R} for the random tournament. Iterated constructions are marked by a dot inside the circle---this means that the blob consists of a copy of the entire construction. We also provide a special pictogram for a circular graph~$S^1(\alpha)$. All the above pictograms are summarized in Figure~\ref{fig:graphs}.

\begin{figure}[ht]
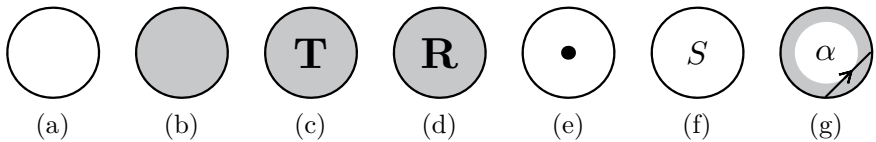

\centering\begin{tabular}{ccccccc}
\includegraphics[scale=.82]{construction-101.mps} & 
\includegraphics[scale=.82]{construction-102.mps} & 
\includegraphics[scale=.82]{construction-104.mps} & 
\includegraphics[scale=.82]{construction-103.mps} & 
\includegraphics[scale=.82]{construction-105.mps} & 
\includegraphics[scale=.82]{construction-107.mps} & 
\includegraphics[scale=.82]{construction-106.mps} \\
(a) & (b) & (c) & (d) & (e) & (f) & (g) \\
\end{tabular}
\caption{(a) Anticlique. (b) Arbitrary tournament. (c) Transitive tournament. (d) Random tournament. (e) Iterative structure. (f) Blob with structure $S$. (g)~Circular graph with parameter~$\alpha$.}\label{fig:graphs}
\end{figure}

\section{Graphs}\label{sec:graphs}

In the following subsections, for each oriented graph on $4$ vertices we present the proven upper and lower bound on its inducibility and provide schematic picture and description of a~construction giving the lower bound. If a~pair $H, \overrightarrow{H}$ is considered, only a~construction for the~graph $H$ is presented; by reversing arcs, one may obtain an analogous construction for the~graph $\overrightarrow{H}$.

Each value of the upper bound which is not preceded with an approximation symbol ($\approx$) is proven exactly and meets the lower bound. 
Whenever we use \flagmatic{} software~\cite{Flagmatic} we indicate on how many vertices it is performed. Unless it is possible to make some reduction by forbidding certain structures in the extremal construction, the computations are performed using graphs on at most $6$ vertices. It is possible to increase this number, which will result in slight improvement of the upper bounds, but will cause much longer running time of the program. 

In almost all cases with sharp bounds, the standard application of the flag algebras semidefinite method is insufficient, due to rounding errors made in the rationalization process. In order to overcome this, one need to provide additional eigenvectors for eigenvalue $0$ of the numerically obtained semidefinite matrix, which are not implied by the extremal construction. Such eigenvectors were found using the method described in the appendix of \cite{BHLPUV} and in Section 2.4.2.2 of \cite{Baber}. For each calculation we publish the applied \flagmatic{} code with all commands and added eigenvectors, as well as a certificate useful for verification of the obtained bound. Explanation how to understand and use the codes is written in Appendix~\ref{sec:code}, while details on verification of the computer-assisted proofs are written in Appendix~\ref{sec:inspect_certificate}.
All the codes and certificates are available with the electronic preprint of this manuscript on \hyperlink{https://arxiv.org/abs/2010.11664}{arXiv:2010.11664}.

For two graph we use the following version of Lemma 2.1 from \cite{Hu}, which can be proved using essentially the same method:

\begin{lem}\label{lem:symmetrization_lemma}
Let $H$ be an oriented graph with the following property --- for every $v, w \in V(H)$ not joined by an arc, $N^+(v) = N^+(w)$ and $N^-(v) = N^-(w)$. Then, for any $n \in \mathbb{N}$, there exists an oriented graph $G$ on $n$ vertices satisfying the same property and such that $N(H, G) = \max \{N(H, G'): \abs{V(G')} = n\}$.
\end{lem}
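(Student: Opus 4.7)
The plan is to adapt Huang's symmetrization argument from \cite{Hu}, exploiting the structural hypothesis on $H$ that any two non-adjacent vertices of $H$ are twins. Fix $n$, let $G$ be an $n$-vertex oriented graph with $N(H, G) = \max\{N(H, G') : |V(G')| = n\}$, and suppose for contradiction that there exist non-adjacent $u, v \in V(G)$ with $(N^+(u), N^-(u)) \neq (N^+(v), N^-(v))$. Define $G_u$ to be obtained from $G$ by replacing $v$'s in- and out-neighborhoods with those of $u$ (and keeping $u, v$ non-adjacent); define $G_v$ symmetrically.

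The next step is a bookkeeping calculation. Partition the induced copies of $H$ in $G$ according to their intersection with $\{u, v\}$: let $A, B, C, D$ denote the numbers of $4$-sets containing, respectively, neither $u$ nor $v$, only $u$, only $v$, and both. Because $u$ and $v$ have matching external neighborhoods in $G_u$, each copy in $G_u$ containing $v$ mirrors a copy in $G$ containing $u$, so $N(H, G_u) = A + 2B + D'$ and analogously $N(H, G_v) = A + 2C + D''$, where $D'$ counts $4$-sets $\{u, v, x, y\}$ inducing $H$ in $G_u$ and $D''$ is the corresponding count for $G_v$.

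The key use of the hypothesis on $H$ is the inequalities $D' \geq D$ and $D'' \geq D$. Indeed, consider any $\{x, y\} \subseteq V(G) \setminus \{u, v\}$ with $G[\{u, v, x, y\}] \cong H$; since $u$ and $v$ are non-adjacent in this copy of $H$, the hypothesis forces them to be twins in $H$, so in $G$ the two vertices $u$ and $v$ must have identical arc-relations to each of $x$ and $y$. But then the modifications defining $G_u$ and $G_v$ (which only alter arcs incident to $v$ or $u$, respectively) leave the induced subgraph on $\{u, v, x, y\}$ unchanged, so the same $4$-set is counted in both $D'$ and $D''$. Summing,
\[
N(H, G_u) + N(H, G_v) - 2 N(H, G) = (D' - D) + (D'' - D) \geq 0,
\]
so at least one of $G_u, G_v$ is again extremal; in this new extremal graph the vertices $u$ and $v$ have been turned into a non-adjacent twin pair.

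To finish, one iterates the procedure, producing a sequence of extremal graphs in which the non-adjacent-twin structure grows. To guarantee termination, among all extremal $n$-vertex oriented graphs pick one maximizing a suitable potential such as $\phi(G) = \sum_{P}|P|^{2}$, where $P$ ranges over equivalence classes of the non-adjacent-twin relation on $V(G)$; a single symmetrization step (with the direction between $G_u$ and $G_v$ chosen so as to preserve extremality) strictly increases $\phi$, contradicting the choice of $G$. The principal obstacle is precisely this termination argument: the direction of cloning imposed by extremality may conflict with the direction that increases $\phi$, so one must either choose the initial extremal $G$ with a carefully tailored secondary criterion or handle degenerate configurations by merging entire classes at once. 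Once this is arranged, no bad pair $u, v$ can exist, and $G$ has the required property.
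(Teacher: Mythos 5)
Your counting step is correct and is exactly the symmetrization method the paper has in mind (it invokes Lemma 2.1 of Huang \cite{Hu} and proves nothing more): with $A,B,C,D$ as you define them, $N(H,G_u)=A+2B+D'$, $N(H,G_v)=A+2C+D''$, and the twin hypothesis on $H$ gives $D'\ge D$ and $D''\ge D$, hence $N(H,G_u)+N(H,G_v)\ge 2N(H,G)$. The genuine gap is the termination of the iteration, which you explicitly flag as the ``principal obstacle'' but do not close, so the proof is incomplete as written. The missing observation is that your displayed inequality, combined with the extremality of $G$ (which gives $N(H,G_u)\le N(H,G)$ and $N(H,G_v)\le N(H,G)$), forces \emph{both} $G_u$ and $G_v$ to be extremal, not merely ``at least one''. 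Consequently there is no conflict between the direction imposed by extremality and the direction that increases your potential $\phi(G)=\sum_P \abs{P}^2$: you are always free to clone the vertex lying in the (weakly) larger twin class onto the other one.

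With that choice $\phi$ does strictly increase, but this needs a short verification you omit: one must check that replacing the arcs at $v$ by copies of the arcs at $u$ cannot split any twin class inside $V(G)\setminus\{v\}$. It cannot: if $N^+(w)=N^+(w')$ and $N^-(w)=N^-(w')$ in $G$, then in $G_u$ the vertex $v$ lies in $N^+(w)$ iff $u$ does, and likewise for $w'$ and for in-neighbourhoods, so $w$ and $w'$ remain twins and classes can only merge; moreover $v$ leaves a class of size $s$ and joins a class of size at least $t+1$, where $t\ge s$ is the size of the class of $u$, so $\phi$ increases by at least $2$. Since $\phi\le n^2$, the iteration terminates (equivalently, choose $G$ extremal maximizing $\phi$ and derive an immediate contradiction, with no iteration at all). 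Once these two points are supplied, your argument is complete and coincides with the intended Huang-style proof.
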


\grafsekcja{1}{\graf{1}}{%
Upper bound: \colorbox{gray!10}{1}}{%
1}{}{1}{Anticlique.}

\grafsekcja{2}{\graf{2}}{%
Upper bound: \colorbox{gray!10}{72/125}

As each weak component of this graph is a transitive tournament, the inducibility of this graph is equal to the inducibility of the non-oriented graph $K_2\sqcup \An{2}$ (an edge on four vertices), which was explicitly determined in \cite{EL}.}{%
72/125}{}{.6}{Balanced union of arbitrary five tournaments.
}

\grafsekcja{3}{\graf{3}}{
Upper bound: \colorbox{gray!10}{3/8}

As each weak component of this graph is a transitive tournament, the inducibility of this graph is equal to the inducibility of the non-oriented graph $K_2\sqcup K_2$, which is equal to the inducibility of its complement $K_{2,2}$, whose value is known \cite{BNT, PG}.}{%
3/8}{}{1}{Balanced union of arbitrary two tournaments.}

\grafsekcja{4}{\graf{4}\ss\graf{5}}{%
\uboundflagmatic{6}{$\approx$ 0.235046}}{%
$\approx$ 0.234309}{
obtained by blob size optimization, assuming that $x_i = y_i$ for all $i \geq 1$, $x_i' = y_i'$ for all $i \geq 0$, $x_1' = x_i' = 0$ for $i \geq 6$, and $(x_i)_{i \geq 5}$ is a~geometric series.
}{.5}{Construction $\Gconstr{4}$ is the following: split vertices into parts $X_i$ and $Y_i$, $i \geq 0$. Consider also partitions $X_0 = \bigcup_{i=0}^\infty X_i'$ and $Y_0 = \bigcup_{i=0}^\infty Y_i'$. Add all edges from $X_j$ to $X_i$ and from $Y_j$ to $Y_i$ for $0 \leq i < j$. Finally, add all edges from $X_i$ to $\bigcup_{j=1}^i Y_i'$ and from $Y_i$ to $\bigcup_{j=1}^i X_i'$ for $i \geq 1$.}

\grafsekcja{5}{\graf{6}}{%
\uboundflagmatic{6}{$\approx$ 0.204123}}{%
$64/315 \approx 0.203175$}{}{1}{Balanced union of two iterated balanced blow-ups of $\overrightarrow{C_4}$.}

\grafsekcja{6}{\graf{7}}{%
\uboundflagmatic{6}{$\approx$ 0.193552}}{%
$6/31 \approx 0.193548$ 
}{
}{.7}{Iterated balanced blow-up of $\overrightarrow{C_5}$.}

\grafsekcja{7}{\graf{8}\ss\graf{9}}{%
\uboundflagmatic{6}{$\approx$ 0.105867}
}{%
$c_7 \approx 0.102124$}{, where 
$$c_7 = \max_{\substack{a,b,c,d\in[0,1] \\ 2a+2b+2c+d=1}} \frac{24((a+b)^2c^2+2abd(b+2c))}{1-2a^4-2b^4-2c^4-d^4}$$
and the maximum is attained at $(a, b, c, d) \approx (0.117446, 0.159343, 0.146896, 0.152629)$.
}{.9}{Construction $\Gconstr{7}$ is a~weighted iterated blow-up of a graph on $7$ vertices. More specifically, let $2a+2b+2c+d = 1$ and split vertices into $7$ parts---two of size $a$, two of size $b$, two of size $c$, and one of size $d$. Add edges between appropriate parts to make them complete bipartite and orient them as shown in the picture. Finally, iterate this process inside each of the seven parts. Note that the direction of edges between parts of size $c$ is not important, hence many non-isomorphic examples of graphs with this bound met can be found.}

\grafsekcja{8}{\graf{10}}{%
\uboundflagmatic{6}{81/512}}{%
81/512}{}{1}{Union of two balanced anticliques with random edges from the first to the second with probability~$\frac34$.}

\grafsekcja{9}{\graf{11}\ss\graf{12}}{%
Upper bound: \colorbox{gray!10}{$c_9$}, where 
$$c_9=\max_{x\in [0,\frac12]}\frac{32(1-2x)x^3}{1-(1-2x)^4}
=4-\frac{6}{\sqrt[3]{\sqrt{2}-1}}+6\sqrt[3]{\sqrt{2}-1}$$
and the maximum is attained at $x \approx 0.37346$. Proved by Huang \cite{Hu}, also included as an example application of the \flagmatic{} software \cite{Flagmatic}. 
}{%
$c_9 \approx 0.423570$}{}{1}{Split vertices into two parts, $A$ of size $x$ and $B$~of size $1-x$. Then, put all possible edges from $A$ to $B$ and iterate this process inside $A$.}

\grafsekcja{10}{\graf{13}\ss\graf{14}}{%
\uboundflagmatic{5}{81/400}}{%
81/400}{}{0.8}{Construction $\Gconstr{10}$ is the following: split vertices into $5$ parts---$A$ of size $\frac{3}{20}$, $B$~of size $\frac{1}{4}$, $C$ of size~$\frac{3}{20}$, $D$ of size $\frac{1}{4}$, and $E$ of size $\frac{1}{5}$. Put all edges from $A$ to~$B$, from $B$ to $C$, from $C$ to $D$, from $D$ to~$A$, from $C$ to~$E$, and from $A$ to $E$.} In \cite{HMNW} there~is also a~general probabilistic construction for any orientation of a star.

\grafsekcja{11}{\graf{141}}{%
\uboundflagmatic{5}{1/8}}{%
1/8}{}{1}{Balanced union of arbitrary two regular tournaments.}

\grafsekcja{12}{\graf{142}}{%
Upper bound: \colorbox{gray!10}{1/2}

As each weak component of this graph is a transitive tournament, the inducibility of this graph is equal to the inducibility of the non-oriented graph $K_3\sqcup \An{1}$, which is equal to the inducibility of $K_{1,3}$ determined in~\cite{BS}.}{%
1/2}{}{1}{Balanced union of two transitive tournaments.}

\grafsekcja{13}{\graf{15}}{%
Upper bound: \colorbox{gray!10}{2/21} proved in~\cite{HLPV} using flag algebra method with additional stability arguments.}{%
2/21}{}{1}{Iterated balanced blowup of $\overrightarrow{C_4}$.}

\grafsekcja{14}{\graf{16}}{%
\uboundflagmatic{5}{3/16}}{%
3/16}{}{1}{Balanced complete bipartite graph with randomly oriented edges. In~\cite{HLPV} it is additionally proven that every extremal graph is within edit distance $o(n^2)$ from a~pseudorandom orientation of a~balanced complete bipartite graph.}

\grafsekcja{15}{\graf{17}}{%
Upper bound: \colorbox{gray!10}{$c_{15}$} proved in~\cite{HLPV}, where 
$$c_{15}=\max_{x\in [0,\frac12]}\frac{12x^2(1-2x)^2}{1 - 2x^4}
= 9\left(\sqrt{2} - 2\right) + 6 \sqrt{2\left(\sqrt{2} - 1\right)}$$
and the maximum is attained at $x \approx 0.25202$. It is shown in \cite{HLPV} that the unique limit object maximizing inducibility of \graf{17} is given by the construction presented below. 
}{%
$c_{15} \approx 0.189000$}{}{1}{Construction $\Gconstr{15}$ is the following: split vertices into three parts, $A$ of size $x$, $B$~of size $1 - 2x$, and $C$ of size $x$. Put all possible edges from $A$ to $B$ and from $B$ to $C$. Iterate this process inside $A$ and inside $C$.}

\grafsekcja{16}{\graf{18}}{%
Upper bound: \colorbox{gray!10}{3/8}

It is equal to the inducibility of non-oriented $K_{2,2}$ solved in \cite{BNT,PG}.}{
3/8}{}{1}{Complete balanced bipartite graph with edges oriented from the first part to the second.}

\grafsekcja{17}{\graf{19}\ss\graf{20}}{%
\uboundflagmatic{6}{$\approx$ 0.095640}}{%
$2/21 \approx 0.095238$}{}{1}{Iterated balanced blow-up of graph \graf{19}.}

\grafsekcja{18}{\graf{21}\ss\graf{22}}{%
\uboundflagmatic{6}{$\approx$ 0.189030}}{%
$c_{18} \approx 0.189000$}{, where 
$$c_{18}=\max_{x\in [0,\frac12]}\frac{12x^2(1-2x)^2}{1 - 2x^4}
= 9\left(\sqrt{2} - 2\right) + 6 \sqrt{2\left(\sqrt{2} - 1\right)}$$
and is attained at $x \approx 0.25202$.
}{1}{

\noindent Construction $\Gconstr{18}$ is the following: split vertices into three parts, $A$ of size $x$, $B$~of size $x$, and $C$ of size $1 - 2x$. Put all possible edges from $A$ to $B$ and from $B$ to $C$, put also all possible edges inside $C$ to make it an arbitrary tournament. Iterate process inside $A$ and inside $B$.}

\grafsekcja{19}{\graf{23}\ss\graf{24}}{%
\uboundflagmatic{6}{$\approx$ 0.317681}}{%
$c_{19} \approx 0.317678$}{, where 
$$c_{19}=\max_{x\in [0,\frac12]}\frac{24(1-2x)x^3}{1-(1-2x)^4}
=\frac{3}{2}\left(2-\frac{3}{\sqrt[3]{\sqrt{2}-1}}+3\sqrt[3]{\sqrt{2}-1}\right)$$
and the maximum is attained at $x \approx 0.37346$.
}{1}{Construction $\Gconstr{19}$ is the following: split vertices into three parts---$A$ of size~$x$, $B$ of size $x$, and $C$ of size $1-2x$. Put all possible edges from $C$ to $A$, from $C$ to~$B$ and all possible edges in parts $A$ and $B$ to make them arbitrary tournaments. Iterate this process inside part $C$.}

\grafsekcja{20}{\graf{25}\ss\graf{26}}{%
\uboundflagmatic{6}{$\approx$ 0.119760}}{%
$c_{20} \gtrsim 0.119537$}{, where 
$$c_{20} = \max \frac{24y(1-x-y)}{1-x^4} (xy I_1 + y(1-x-y) I_2 + (1-x-y)^2 I_3),$$  
$$I_1 = \int_0^1 p(a)(1-p(a))\dd a, \quad I_2 = \int_0^1 \int_0^a p(a)^2p(b)(1-p(b))\dd b\dd a,$$
$$I_3 = \int_0^1 \int_0^a \int_0^b (1-p(c))p(b)(1-p(a))\dd c\dd b\dd a$$
and the maximum is taken over all $x,y \in[0,1]$ and functions $p : [0,1] \longrightarrow [0,1]$.
The above lower bound is obtained by maximizing over values of $x, y \in [0,1]$ and $p$ being a polynomial of degree at most $7$.}{1}{Construction $\Gconstr{20}$ is the following: split vertices into three parts---$A$ of size~$x$, $B$ of size $y$, and $C$ of size $1-x-y$, fix also a~function $p: [0,1] \to [0,1]$. Put all possible edges from $A$ to $B$, from $C$~to~$A$, and inside part $C$ to make it a~transitive tournament. Treat $C$ as a~finite subset of $[0,1]$, with order $<$ on $[0,1]$ preserving the transitive order of vertices of $C$. For each vertex $v \in C$ and $w \in B$, put an edge from $v$ to $w$ independently at random with probability $p(v)$. Finally, iterate this process inside part $A$. }

\grafsekcja{21}{\graf{27}}{%
\uboundflagmatic{5}{$(28+6\sqrt{3})/169$}}{%
$(28+6\sqrt{3})/169 \approx 0.227173$}{}{1.3}{Circular graph with parameter $(9+\sqrt{3})/26$.

To determine the number of embeddings of $\graf{27}$ to the circular graph $S^1(\alpha)$ with parameter $\alpha=(9+\sqrt{3})/26$, note that if the vertices of $\graf{27}$ are denoted by $v$, $x$, $y$, $z$ in such a way that $v\to x\to y\to z$, then every its embedding has four vertices in order $v$, $x$, $y$, $z$ along the circle. 
Fix $v$ on the circle and parameterize the remaining vertices by their oriented arc-distance from $v$.
Since $z$ is non-adjacent to $v$, its position must be in the interval $(\alpha, 1-\alpha)$, while $x$ and $y$ need to form an ordered pair in the interval of vertices adjacent to both $v$ and $z$.  
Therefore, the density of~$\graf{27}$ in $S^1(\alpha)$ is equal to
\[4! \int_{\alpha}^{1-\alpha}\frac{1}{2}(\alpha-(z-\alpha))^2\dd z = \frac{28+6\sqrt{3}}{169}.\]
}

\grafsekcja{22}{\graf{28}}{%
\uboundflagmatic{8}{$\approx$ 0.244055}

By Lemma \ref{lem:symmetrization_lemma}, we may assume that the graphs maximizing the number of induced copies of~$\graf{28}$ have the property that the in- and out-neighborhood of non-neighbors are the same. In particular, we can consider the inducibility of~$\graf{28}$ in a~family of graphs which have no induced copy of $T_2 \sqcup \An{1}$ (an edge plus an isolated vertex) or $\Pn{3}$, simplifying this way the computations in \flagmatic.}{%
$c_{22} \approx 0.244053$ }{, where
$$c_{22}=\max_{0\leq 2y\leq 1-q\leq 1} \left(1-\frac{2y}{1-q}\right)^2\frac{12y^2}{(1-q)^2}+\frac{24qy^3}{(1-q)^2}\left(\frac{1}{1+q+q^2}-\frac{y}{1-q^4}\right).$$
}{.7}{Construction consists of a~sequence of blobs $A^i$ for $i \in \mathbb{Z}$, where each $A^i$ is an anticlique, and each pair of vertices from different blobs is joined by an arc pointing to the blob of the larger index. 
Let blob $A^i$ be of the size $a_{|i|}$, where $a_0 + 2a_1 + 2a_2 + \ldots =1$. Letting the blob sizes decrease geometrically, i.e., $a_i=yq^{i-1}$ for $i=1,2,\ldots$ and $a_0 = 1 - \frac{2y}{1-q}$, we obtain the desired density.
}

\grafsekcja{23}{\graf{29}\ss\graf{30}}{%
\uboundflagmatic{6}{$\approx$ 0.177784}}{%
$c_{23} \approx 0.177630$}{, where 
\begin{eqnarray*}
c_{23}&=&\max_{x\in[0,1]}\frac{\frac{8}{5}x(1-x)^3 + \frac{8}{315}(1-x)^4}{1-x^4} \\
&=& \frac{4}{105}\left(61^{2/3}\sqrt[3]{\sqrt{7690} - 63} - 61^{2/3}\sqrt[3]{63 + \sqrt{7690}} + 42\right)
\end{eqnarray*}
and the maximum is attained at $x \approx 0.24063$.
}{1}{Construction $\Gconstr{23}$ is the following: split vertices into two parts---$A$ of size $x$ and $B$ of size $1-x$. Put edges in $B$ to make it an iterated blow-up of $\overrightarrow{C_4}$ and all possible edges from $A$ to $B$. Iterate the process inside $A$.}

\grafsekcja{24}{\graf{31}\ss\graf{32}}{
Upper bound: \colorbox{gray!10}{$3/8$}

\begin{proof} Let $G$ be a~graph on $n$ vertices maximizing the number of induced copies of $H = \graf{31}$ and introduce the following equivalence relation in $V(G)$: $v \sim w$ iff $N^+(v) = N^+(w)$ and $N^-(v) = N^-(w)$. By Lemma \ref{lem:symmetrization_lemma}, since $H$ has the property that $v \sim w$ iff $v$ and $w$ are not joined by an arc, we may assume that $G$ satisfies the same property.

Denote by $T$ the set of all vertices which belong to equivalence classes of size one, and let $t = \abs{T}$. Let $m$ be the number of the remaining equivalence classes; denote them by $B_1, \ldots, B_m$, where $b_i = \abs{B_i}$ and $b_1 \leq b_2 \leq \ldots \leq b_m$. Note that each induced copy of $H$ which is not disjoint from $T$ contains exactly two vertices in $T$ (joined by an arc, which can be oriented in any way) and two vertices in some $B_i$, with arcs oriented from $T$ to $B_i$. Hence, the number of induced copies of~$H$ in $G$ does not depend on the orientation of arcs between vertices in $T$. Also, by orienting the arcs in such a~way that $T \to B_i$ for every $i \in [m]$, we won't decrease the number of induced copies of~$H$.
The following claim gives the orientation of arcs between the remaining pairs of equivalence classes.

\begin{clm} We can orient the arcs between $B_i$ and $B_j$ for every $i < j$ so that $B_i \to B_j$ without decreasing the number of induced copies of~$H$. 
\end{clm}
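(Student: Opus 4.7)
The plan is to express $N(H,G)$ as a clean sum parameterized by the ``sink blob'' $B_k$ and then to show that the sorted orientation $\sigma^{\star}$ with $B_i\to B_j$ for all $i<j$ globally maximizes this expression by first reducing to acyclic tournaments and then sorting them. Every induced copy of $H$ in $G$ is uniquely described by its unique non-adjacent pair $\{c,d\}$, which necessarily lies inside a single equivalence class $B_k$, together with the oriented source arc $(a,b)$. The requirement that both $a$ and $b$ beat $\{c,d\}$ forces $a,b\in P_k:=T\cup\bigcup_{j\in N^-(k)}B_j$, where $N^-(k)$ is the in-neighborhood of $k$ in the tournament $\sigma$ on the blobs; conversely, any such pair of choices yields an induced copy of $H$. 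Hence $N(H,G)=\sum_{k=1}^m\binom{b_k}{2}\,e(G[P_k])$. Since the only non-arcs inside $P_k$ lie within a single $B_j$, we have $e(G[P_k])=\binom{t+B^-(k)}{2}-\sum_{j\in N^-(k)}\binom{b_j}{2}$ with $B^-(k):=\sum_{j\to k}b_j$, and the subtracted double sum collapses over $k$ to the orientation-invariant quantity $\sum_{\{j,k\}}\binom{b_j}{2}\binom{b_k}{2}$. Consequently, the task reduces to maximizing $W(\sigma):=\sum_{k=1}^m\binom{b_k}{2}\binom{t+B^-(k)}{2}$ over all tournaments $\sigma$ on $\{B_1,\ldots,B_m\}$.

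\textbf{Step A (acyclic reduction).} I would assume $\sigma$ maximizes $W$ and contains a directed $3$-cycle $B_i\to B_j\to B_k\to B_i$. Flipping a single arc $p\to q$ changes only $B^-(p)$ and $B^-(q)$; using the identity $\binom{t+A+b}{2}-\binom{t+A}{2}=(t+A)b+\binom{b}{2}$, the three candidate single-arc reversals produce changes
\[
\Delta_l=\binom{b_\alpha}{2}(t+B^-(\alpha))\,b_\beta-\binom{b_\beta}{2}(t+B^-(\beta)-b_\alpha)\,b_\alpha,
\]
where $(\alpha,\beta)$ cycles through $(i,j),(j,k),(k,i)$ and the $\binom{b}{2}$-type cross-terms cancel by the symmetry $\binom{b_p}{2}\binom{b_q}{2}=\binom{b_q}{2}\binom{b_p}{2}$. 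If $\Delta_1,\Delta_2,\Delta_3\leq 0$, multiplying the three inequalities and dividing through by the positive quantity $b_ib_jb_k\binom{b_i}{2}\binom{b_j}{2}\binom{b_k}{2}$ yields
\[
(t+B^-(i)-b_k)(t+B^-(j)-b_i)(t+B^-(k)-b_j)\geq(t+B^-(i))(t+B^-(j))(t+B^-(k)),
\]
which is impossible, because each left-hand factor is nonnegative (since $i\to j$ in the cycle forces $B^-(j)\geq b_i$, and analogously for the other two) and strictly smaller than its right-hand counterpart (as every $b_\cdot\geq 2$). So some $\Delta_l>0$, contradicting maximality, and I may assume $\sigma$ is transitive.

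\textbf{Step B (sorting the transitive order).} For a transitive tournament with linear order $\pi$, swapping two adjacent entries $\pi(s)=p$ and $\pi(s+1)=q$ again affects only $B^-(p)$ and $B^-(q)$, and an analogous short computation gives
\[
\Delta=(t+A)\cdot\frac{b_pb_q(b_p-b_q)}{2},\qquad A:=\sum_{r<s}b_{\pi(r)},
\]
which is nonnegative whenever $b_p\geq b_q$. Thus bubble-sorting the order into ascending sequence of blob sizes never decreases $W$ and ends precisely in the sorted orientation $\sigma^{\star}$. Combined with Step A this gives $W(\sigma)\leq W(\sigma^{\star})$ for every $\sigma$, so reorienting to $\sigma^{\star}$ does not decrease $N(H,G)$. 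The main obstacle is Step A: one has to arrange the three flip increments in a symmetric enough form for the binomial cross-terms to cancel, and then spot that the product of the three hypothetical inequalities $\Delta_l\leq 0$ contradicts a pointwise comparison of factors.
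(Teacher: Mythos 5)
Your proof is correct, but it takes a genuinely different route from the paper's. You rewrite $N(H,G)$ as $\sum_k\binom{b_k}{2}\,e(G[P_k])$, peel off the orientation-invariant term $\sum_{\{j,k\}}\binom{b_j}{2}\binom{b_k}{2}$, and thereby reduce the Claim to maximizing the explicit functional $W(\sigma)=\sum_k\binom{b_k}{2}\binom{t+B^-(k)}{2}$ over all tournaments $\sigma$ on the blobs; you then kill directed triangles in a maximizer via the three single-arc flip increments and the multiplicative comparison, and sort the resulting transitive order by adjacent transpositions. The paper argues directly on copies of $H$: it reorients in one batch all arcs between $B_m$ and the blobs it dominates so that everything points into the largest blob, compares the destroyed and created copies term by term using only $b_m\geq b_i$, and then recurses on $T,B_1,\ldots,B_{m-1}$. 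Your approach buys a cleaner global picture (the sorted transitive orientation is shown to maximize $W$ over \emph{all} blob orientations, with a closed-form objective), at the price of the extra acyclicity step and the care needed when multiplying inequalities; the paper's batch move plus recursion is shorter and uses nothing beyond maximality of $b_m$. Two points you should make explicit: the inclusion $a,b\in P_k$ uses the reduction, already performed before the Claim, that $T\to B_i$ for every $i$; and in Step A the contradiction also needs each factor $t+B^-(\cdot)$ on the right-hand side to be strictly positive (true, since every blob on the $3$-cycle has an in-neighbor blob of size at least $2$), so that the strict factorwise comparison of nonnegative numbers yields a strict inequality of products.
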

\begin{proof}
Consider a partition $I \cup J = [m-1]$ of indices such that $B_i \to B_m$ for $i \in I$ and $B_m \to B_j$ for $j \in J$. Let us modify graph $G$ by orienting all arcs incident to $B_j$ and $B_m$ towards $B_m$ for each $j \in J$. Each copy of~$H$ removed in this way has exactly one vertex in $B_m$, exactly two vertices in $B_j$ for some $j \in J$, and one more vertex which belongs neither to $B_j$ nor $B_m$ and has outgoing arcs to $B_j$. The number of lost copies is then equal to

\begin{align}\label{eq:g24-first-expr}
b_m  \sum_{j \in J} \binom{b_j}{2} \left(t + \sum_{i \in I, B_i \to B_j} b_i + \sum_{j'\in J, B_{j'} \to B_j} b_{j'}\right) .
\end{align}

The created copies of $H$ contain exactly two vertices in $B_m$ and at least one vertex in some $B_j$ for $j \in J$, so the number of created copies is equal to

\begin{align}\label{eq:g24-second-expr}
\binom{b_m}{2} \left( t\sum_{j \in J} b_j + \sum_{i \in I, j \in J} b_i b_j  + \sum_{j < j' \in J} b_j b_{j'}  \right).
\end{align}

Now, we can compare both values by looking at appropriate terms in both formulas. For each pair $j \neq j' \in J$, we have exactly one of the following two terms
$$ b_m  \binom{b_j}{2}b_{j'}, \quad b_m\binom{b_{j'}}{2}b_j $$
in \eqref{eq:g24-first-expr} containing $b_jb_{j'}$, and exactly one term
$$\binom{b_m}{2} b_j b_{j'} $$ 
in \eqref{eq:g24-second-expr}. Since $b_m \geq b_i$ for all $i \in [m-1]$, we have 
  $$\binom{b_m}{2} b_j b_{j'} \geq b_m  \binom{b_j}{2}b_{j'} \quad \mathrm{and} \quad \binom{b_m}{2} b_j b_{j'} \geq b_m\binom{b_{j'}}{2}b_j.$$
  
Now, for each $i \in I$ and $j \in J$, we have at most one term containing $b_ib_j$ in~\eqref{eq:g24-first-expr}, it is equal to $b_m \binom{b_j}{2} b_i$, which is not greater than $\binom{b_m}{2} b_ib_j$ in \eqref{eq:g24-second-expr}. Finally, for $t$ and any index $j \in J$, we have one term $b_m \binom{b_j}{2} t$ in \eqref{eq:g24-first-expr}, which is again not greater than
$\binom{b_m}{2} b_j t$ in \eqref{eq:g24-second-expr}. Therefore, \eqref{eq:g24-first-expr} is not greater than \eqref{eq:g24-second-expr}, i.e., the number of copies of~$H$ in $G$ did not decrease after orienting all arcs towards~$B_m$.

We can repeat this argument for the subgraph of $G$ induced by $T$ and $B_1, \ldots, B_{m-1}$---just note that by reorienting edges in this subgraph, we will not decrease the number of copies of~$H$ which contain at least one vertex in $B_m$. By repeating this iterative procedure, we will eventually obtain the property that $B_i \to B_j$ whenever $i < j$.
\end{proof}

So far, we proved that the graph $G$ has the following structure:

$$\overrightarrow{T \to B_1 \to B_2 \to \ldots \to B_m}.$$

We would like to show that $m = 1$, so that $G$ is just of the form $T \to B_1$. Assume that this is not the case, i.e., $m \geq 2$, and $G$ is an extremal graph with $b_m$ being maximal and---for this value of $b_m$---also $t$ being maximal.

\begin{clm}\label{cla:tbm}
If $m \geq 2$, then $t > b_m$.
\end{clm}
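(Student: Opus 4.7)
The plan is to argue by contradiction: assume $m \geq 2$ and $t \leq b_m$, and produce a modification that contradicts either the extremality of $G$ or the maximality of $t$ in the tie-breaking choice.

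The natural operation is to \emph{dissolve} the smallest blob $B_1$: reorient the non-edges inside $B_1$ as an arbitrary tournament, so that the $b_1$ vertices of $B_1$ become singletons and join $T$. The resulting graph $G'$ keeps the chain structure $T' \to B'_1 \to \cdots \to B'_{m-1}$ with sizes $(t + b_1, b_2, \ldots, b_m)$, and by the observation already used in the preceding claim (together with Lemma~\ref{lem:symmetrization_lemma}), $N(H, G)$ is independent of the orientations among singletons, so the comparison $N(G)$ vs.\ $N(G')$ depends only on the blob sizes.

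Writing $S_k := t + b_1 + \cdots + b_{k-1}$, a direct enumeration gives
\[
N \;=\; \sum_{k=1}^{m}\binom{b_k}{2}\left[\binom{S_k}{2} - \sum_{i<k}\binom{b_i}{2}\right],
\]
since a 4-set induces a copy of $H$ (or its reverse) iff its unique non-adjacent pair lies in a common $B_k$ and the other two vertices form an adjacent pair chosen among the $S_k$ strictly earlier vertices. A short telescoping (the $k = 1$ term disappears and each $\sum_{i<k}\binom{b_i}{2}$ for $k \geq 2$ loses its $i = 1$ contribution) yields
\[
N(G') - N(G) \;=\; \binom{b_1}{2}\left[\sum_{k=2}^{m}\binom{b_k}{2} - \binom{t}{2}\right].
\]
Using $b_1 \geq 2$ and $\binom{t}{2} \leq \binom{b_m}{2} \leq \sum_{k=2}^{m}\binom{b_k}{2}$ (which follows from $t \leq b_m$), this difference is nonnegative, and strictly positive unless $m = 2$ and $t = b_m$. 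Outside this boundary case, $N(G') > N(G)$ contradicts extremality.

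In the remaining case $m=2$ and $t = b_2$, the identity $\binom{b_1}{2} + \binom{t}{2} + b_1 t = \binom{b_1 + t}{2}$ collapses $N(G)$ to $\binom{t}{2}\binom{n-t}{2}$, the value attained by an $m=1$ graph with parts of sizes $t$ and $n - t$. Since $b_1 \geq 2$ forces $t = (n - b_1)/2$ to be strictly less than $\lfloor n/2 \rfloor$, the balanced $m=1$ graph on parts $\lfloor n/2 \rfloor$ and $\lceil n/2 \rceil$ achieves strictly more copies than $G$, again contradicting extremality. The central technical point is verifying the $\Delta N$ identity cleanly; once this is in place, the rest is routine case bookkeeping.
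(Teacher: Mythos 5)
Your argument is correct, and it differs from the paper's in two genuine ways. The paper perturbs $G$ by adding a \emph{single} arc inside $B_1$: extremality immediately gives $\binom{t}{2} \geq \sum_{i\geq 2}\binom{b_i}{2}$, hence $t \geq b_m$ (strictly if $m>2$), and the tight case $m=2$, $t=b_m$ is killed by moving the two newly joined vertices into $T$, which contradicts the \emph{secondary} maximality of $t$ built into the choice of $G$. Your dissolution of all of $B_1$ is just the aggregated version of the same comparison — your $\Delta N = \binom{b_1}{2}\bigl[\sum_{k\geq 2}\binom{b_k}{2} - \binom{t}{2}\bigr]$ has exactly the paper's bracket — but your treatment of the tight case is different and arguably cleaner: by collapsing $N(G)$ to $\binom{t}{2}\binom{n-t}{2}$ with $t < \lfloor n/2\rfloor$ and comparing against the balanced graph $T \to B_1$, you contradict extremality outright, so the tie-breaking assumptions (maximal $b_m$, then maximal $t$) are not needed for this claim at all; the paper's route needs them here but gets away with a purely local move. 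Two small points to tidy: your parenthetical ``(or its reverse)'' in the counting step is inaccurate — in the chain structure the 4-sets you enumerate (non-adjacent pair in $B_k$, an adjacent pair among the $S_k$ earlier vertices) are precisely the induced copies of $H$ itself, which is what the formula must count, so nothing breaks — and the final step needs the (easy, but worth one line) fact that $s \mapsto \binom{s}{2}\binom{n-s}{2}$ is \emph{strictly} increasing for integers $s < \lfloor n/2\rfloor$, which is the same maximization fact the paper invokes at the end of its main proof.
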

\begin{proof}
Since $G$ is an extremal graph, introducing a~single arc in $B_1$ shall not increase the number of copies of~$H$. Such an arc removes $\binom{t}{2}$ copies of~$H$ and creates $\sum_{i=2}^m \binom{b_i}{2}$ copies of~$H$. Therefore, we have

\begin{align*}
\binom{t}{2} \geq \sum_{i \geq 2} \binom{b_i}{2}.
\end{align*}

In particular, $t \geq b_m$. Moreover, if $m > 2$, then $t > b_m$, since $\binom{b_2}{2} \geq 1$. Also, if $m = 2$ and $t = b_m$, then we can move those two vertices which we just joined by an arc to $T$; this way, we will not decrease the number of copies of~$H$ in $G$, but we will increase the size of $T$, contradicting our assumptions about the maximality of the graph $G$. It follows that we must have $t > b_m$.
\end{proof}

We are ready to give an argument contradicting the maximality of $G$. For this, remove one vertex from $T$ and add one vertex to $B_m$. This way, we destroy the following number of copies of~$H$:
\begin{align*}
(t-1)\sum_{i = 1}^m \binom{b_i}{2} + \sum_{i=1}^{m-1} \sum_{j=i+1}^m b_i \binom{b_j}{2},
\end{align*}
which can be written in the following way:
\begin{align}\label{eq:gr24-third-sum}
(t-1)\binom{b_m}{2} + (t-1)\sum_{i=1}^{m-1} \binom{b_i}{2} + \sum_{i=1}^{m-1} \sum_{j=i+1}^{m-1} b_i \binom{b_j}{2} + \sum_{i=1}^{m-1} b_i \binom{b_m}{2}.
\end{align}
On the other hand, we create the following number of copies of $H$:
\begin{align*}
b_m\left(\binom{t-1}{2} + (t-1)\sum_{i=1}^{m-1}b_i + \sum_{i=1}^{m-1}\sum_{j=i+1}^{m-1} b_ib_j \right),
\end{align*}
which can be written in the following way:
\begin{align}\label{eq:gr24-fourth-sum}
b_m\binom{t-1}{2} + (t-1)\sum_{i=1}^{m-1} \frac{b_i b_m}{2} + \sum_{i=1}^{m-1}\sum_{j=i+1}^{m-1} b_ib_jb_m + \sum_{i=1}^{m-1} (t-1)\frac{b_i b_m}{2}.
\end{align}
Since $t-1 \geq b_m$ from Claim~\ref{cla:tbm}, it is straightforward to see that each summand of \eqref{eq:gr24-third-sum} is not greater than an appropriate summand of \eqref{eq:gr24-fourth-sum}, and that there is at least one strictly smaller summand (e.g. the last one).

It follows that graph $G$ must be of the form $T \to B_1$. The number of copies of $H$ in $G$ is then equal to $\binom{t}{2}\binom{n - t}{2}$, which is maximized for $t \in \{ \lfloor \frac n2 \rfloor, \lceil \frac n2 \rceil\}$. This gives the desired upper bound for the inducibility of $H$.
\end{proof}
}{%
3/8}{}{1}{Full join from an arbitrary tournament to an anticlique of the same size.}

\grafsekcja{25}{\graf{33}}{%
Upper bound: \colorbox{gray!10}{$4/9$}

\begin{proof}
Note that in each copy of $\graf{33}$ there are exactly two vertices for which outdegree, indegree, and nondegree are equal to $1$. Basing on this observation and using AM-GM inequality, we get the following bound on the number of copies of $\graf{33}$ in any $n$-vertex graph $G$:
\[\Nnum{\graf{33}}{G} \leq \frac12\sum_{v\in V(G)}d^+(v)d^-(v)d'(v)\leq\frac{n(n-1)^3}{54}.\]
In particular, 
\[\Ilim{\graf{33}} \leq \lim_{n \to \infty} \frac{n(n-1)^3}{54}/\binom{n}{4} = \frac49.\]
\end{proof}}{%
4/9}{}{0.9}{Balanced blow-up of $\overrightarrow{C_3}$.}

\grafsekcja{26}{\graf{34}\ss\graf{35}}{%
\uboundflagmatic{6}{$\approx$ 0.113205}}{%
$c_{26} \gtrsim 0.112567$}{, where
$$c_{26} = \max \frac{24y^2(1-x-y)}{1-x^4} (x I_1 + (1-x-y) I_2),$$
$$I_1 = \int_0^1 p(a)(1-p(a))\dd a, \quad I_2 = \int_0^1 \int_0^a (1-p(b))^2p(a)(1-p(a))\dd b\dd a$$
and the maximum is taken over all $x,y \in [0,1]$ and functions $p : [0,1] \longrightarrow [0,1]$.
The above lower bound is obtained by maximizing over $x, y \in [0,1]$ and $p$ being a polynomial of degree at most $7$.}{1}{Construction $\Gconstr{26}$ is the following: split vertices into three parts---$A$ of size~$x$, $B$ of size $y$, and $C$ of size $1-x-y$, fix also a~function $p: [0,1] \to [0,1]$. Put all possible edges from $B$ to $A$, from $A$ to $C$, and inside part $C$ to make it a~transitive tournament. Treat $C$ as a~finite subset of $[0,1]$, with order $<$ on $[0,1]$ preserving the transitive order of vertices of $C$. For each vertex $v \in C$ and $w \in B$, put an edge between $v$ and $w$ oriented independently at random, with probability $p(v)$ from $v$ to $w$ and with probability $1-p(v)$ from $w$ to $v$. Finally, iterate this process inside part $A$. }

\grafsekcja{27}{\graf{36}}{%
\uboundflagmatic{6}{$\approx$ 0.148148}}{%
$4/27 \approx 0.148148$}{}{1.3}{Circular graph with parameter $4/9$.

The exact value of the upper bound found by \flagmatic{} is equal to $\frac{491913175465823271551}{3320413933267719290880}$, which decimal expansion agrees with $4/27$ up to the tenth decimal place. 
We believe it is possible to obtain the proof of the optimal upper bound by providing eigenvectors for the eigenvalue zero and solving the semidefinite problem with good enough precision.
Unfortunately, computations on 6~vertices generate too large numerical errors on \texttt{csdp}, while computations on a~solver with higher precision require too large computational power. Thus, we were unable to prove the optimal bound. 

To determine the number of embeddings of $\graf{36}$ to the circular graph $S^1(4/9)$, note that each embedding of a directed triangle can be extended to an embedding of $\graf{36}$ by adding any vertex non-adjacent to one of the vertices of the triangle, and the set of such vertices has measure $1/3$. Moreover, each embedding of $\graf{36}$ is counted this way exactly once. 

In order to find the number of embeddings of a directed triangle with edges $v\to x\to y\to v$, we fix $v$ on the circle and parameterize the remaining vertices by their oriented arc-distance from~$v$. To create a directed triangle, the position of $x$ must be in the interval $(1/9,4/9)$, while $y$ needs to be in the interval of vertices appropriately connected to $v$ and $x$. Since this way we count each directed triangle three times, we need to divide the obtained value by $3$.

Therefore, the density of $\graf{36}$ in $S^1(4/9)$ is equal to
\[\frac{4!}{9}\int_{1/9}^{4/9} \left(x+\frac{4}{9} - \frac{5}{9}\right)\dd x = \frac{4}{27}.\]
}

\grafsekcja{28}{\graf{38}\ss\graf{39}}{%
Upper bound: \colorbox{gray!10}{$c_{28}$} proved in \cite{BLPP} (Theorem 4).
}{%
$c_{28} \approx 0.157501$}{, where
$$c_{28}=\max_{x\in[0,1]}\frac{x^4/8 + x^3(1-x)}{1-(1-x)^4} = \frac{8 - 3^{7/3} + 3^{5/3}}{8}$$
and the maximum is attained at $x \approx 0.85642$. It is proved in \cite{BLPP} that the extremal construction for sufficiently large number of vertices is precisely the one presented below.
}{1}{Construction $\Gconstr{28}$ is the following: split vertices into two parts---$A$ of size $x$ and $B$ of size $1-x$. Let $A$ be a~random tournament and put all possible edges from $A$ to $B$. Finally, iterate the process inside $B$.}

\grafsekcja{29}{\graf{37}}{%
Upper bound: \colorbox{gray!10}{$1/2$}
\begin{proof}
Let $G$ be an $n$-vertex graph with the maximum number of copies of~$\graf{37}$. As adding an arc between two non-neighbors may only increase the number of copies of $\graf{37}$, we may assume that $G$ is a tournament, i.e., $d^+(v)+d^-(v)=n-1$ for every $v\in V(G)$. Note that
\[\Nnum{\graf{37}}{G}\leq\frac12\sum_{v\in V(G)}\Nnum{\graf{143}}{G-v},\]
as in each copy of $\graf{37}$ there are two ways to select vertex $v$ in such a way that remaining three vertices form a copy of $\graf{143}$. Furthermore, for every tournament~$H$ on $k$ vertices, it holds
\[2\Nnum{\graf{143}}{H}+\binom{k}{3}=3\Nnum{\graf{143}}{H}+\Nnum{\graf{144}}{H}=\sum_{v\in V(H)}d^+(v)d^-(v)\leq \frac{k(k-1)^2}{4}\]
by the AM-GM inequality, and in consequence
\[\Nnum{\graf{143}}{H}\leq\frac{(k-1)k(k+1)}{24}.\] Applying this inequality for $k=n-1$ and $H=G-v$ for every $v\in V(G)$, and plugging to the previous estimation, we finally get
\[\Nnum{\graf{37}}{G}\leq\frac{n^2(n-1)(n-2)}{48},\]
hence
\[\Ilim{\graf{37}} \leq \lim_{n \to \infty} \frac{n^2(n-1)(n-2)}{48}/\binom{n}{4} = \frac12.\]
\end{proof}
An independent proof can be found in \cite{BLPP}.}{%
1/2}{}{1}{Circular graph with parameter $1/2$.}

\grafsekcja{30}{\graf{40}}{%
Upper bound: \colorbox{gray!10}{1}}{%
1}{}{1}{Transitive tournament.}

\section*{Acknowledgment}

We would like to thank Jakub Sliacan for his help with the \flagmatic{} software and the referees for their valuable comments which greatly improved the presentation of our results.

\appendix
\newpage
\section{Explanation of the codes}\label{sec:code}

Together with the preprint of the paper at  \hyperlink{https://arxiv.org/abs/2010.11664}{arXiv:2010.11664} we provided source codes of the programs (files with the extension \texttt{.sage}) which we used to obtain the upper bounds. All those programs require to have installed \texttt{sage} \cite{sage} and \flagmatic{} software \cite{Flagmatic}. Below we include a~short explanation of the the codes. As an example we consider the \texttt{graph14.sage} file:

\begin{lstlisting}[language=Python]
from flagmatic.all import *
P=OrientedGraphProblem(5, density="4:12233414", type_orders=[3])
P.set_extremal_construction(field=QQ, target_bound=3/16)
P.add_sharp_graphs(0,9,10,11,12,13,156,157,158,159,167,168,169,170,171,180,181,182,187)
#3:
P.add_zero_eigenvectors(0,matrix(QQ, [(8,0,0,0,0,0,0,0,0,0,0,0,0,0,0,0,0,0,0,1,1,1,1,1,1,1,1), (0,0,0,0,0,0,0,0,0,0,0,0,0,0,0,0,0,0,0,0,0,-1,-1,1,1,0,0), (0,0,0,0,0,0,0,0,0,0,0,0,0,0,0,0,0,0,0,0,-1,0,-1,1,0,1,0), (0,0,0,0,0,0,0,0,0,0,0,0,0,0,0,0,0,0,0,-1,0,0,1,-1,0,0,1)]))
#3:1232
P.add_zero_eigenvectors(2,matrix(QQ, [(0,0,0,0,0,0,0,0,0,-1,0,0,0,0,1,0,0,0,0,0,0,0,0,0,0,0,0), (0,0,0,0,0,0,0,0,0,0,-1,0,0,1,0,0,0,0,0,0,0,0,0,0,0,0,0), (0,0,0,2,2,0,0,0,0,1,1,0,0,1,1,0,0,0,0,0,0,0,0,0,0,0,0), (0,0,0,1,-1,0,0,0,0,0,0,0,0,0,0,0,0,0,0,0,0,0,0,0,0,0,0)]))
#3:1223
P.add_zero_eigenvectors(3,matrix(QQ, [(0,0,0,2,2,0,0,0,0,1,1,0,0,1,1,0,0,0,0,0,0,0,0,0,0,0,0), (0,0,0,1,-1,0,0,0,0,0,0,0,0,0,0,0,0,0,0,0,0,0,0,0,0,0,0), (0,0,0,0,0,0,0,0,0,-1,0,0,0,0,1,0,0,0,0,0,0,0,0,0,0,0,0), (0,0,0,0,0,0,0,0,0,0,-1,0,0,1,0,0,0,0,0,0,0,0,0,0,0,0,0)]))
#3:1213
P.add_zero_eigenvectors(4,matrix(QQ, [(0,2,2,0,0,0,0,0,0,0,0,0,0,0,0,1,1,1,1,0,0,0,0,0,0,0,0), (0,1,-1,0,0,0,0,0,0,0,0,0,0,0,0,0,0,0,0,0,0,0,0,0,0,0,0), (0,0,0,0,0,0,0,0,0,0,0,0,0,0,0,1,0,0,-1,0,0,0,0,0,0,0,0), (0,0,0,0,0,0,0,0,0,0,0,0,0,0,0,0,1,-1,0,0,0,0,0,0,0,0,0)]))
P.solve_sdp()
P.make_exact(16)
P.write_certificate("graph14.cert")
\end{lstlisting}

In line~1, we import \flagmatic{} libraries. In line 2, using the function \texttt{OrientedGraphProblem} we define the problem $P$ we want to solve. First two parameters of the function are mandatory. The first one defines the maximum size of flags used in the computations, while the second one defines the graph density we want to maximize. In the considered example, \texttt{``4:12233414''} means an oriented graph on $4$ vertices (labeled with numbers from $1$ to $4$) and edges $12$, $23$, $34$, $14$, i.e., graph \graf{16}. We can also specify other conditions for the problem, for instance in the above code we limit the computations only to types of order 3. 

At this moment we have already defined the problem which \flagmatic{} can solve numerically using a semidefinite programming solver. However, in order to obtain an exact result we need to provide more information for the rounding procedure. In line 3, we define the target bound (i.e., the value which we believe is the correct upper bound) and over which field we perform the computations --- here, \texttt{QQ} stands for the field of rational numbers. In line 4, we specify which graphs densities are forced to satisfy the bound as equalities, they follow from the expected extremal constructions. Finally, in lines 5-12, we provide additional eigenvectors corresponding to the zero eigenvalue needed to properly round the semidefinite matrix. Some of the vectors follow from the extremal constructions, but some of them were obtained using the method described in the appendix of \cite{BHLPUV} and in Section 2.4.2.2 of \cite{Baber}.

All that remains to do is to perform the computations. In line 13, we numerically solve the semidefinite programming (by default it is performed by \texttt{csdp} \cite{csdp}, but in a~few cases where double-double precision is needed we used \texttt{sdpa-dd} \cite{sdpadd}). In line 14, we round the output of the solver in the considered field. Finally, in the last line, we create a~certificate which contains a~complete description of the problem, used flags, graphs, matrices, and the proven bound. In Appendix~\ref{sec:inspect_certificate} we explain how one can verify the obtained proof using such a certificate. 

To run the program, use the command \texttt{sage graph14.sage}.

\section{Verification of the proofs}\label{sec:inspect_certificate}

For all the results obtained by the \flagmatic{} software we provide certificates containing a~complete description of the problem, used flags, graphs, matrices, and the proven bound. Since a proper verification usually requires too many computations to do it by hand, \flagmatic{} is distributed together with an independent checker program \texttt{inspect\_certificate.py} written by Vaughan. The program by principle requires only Python to run, however, some of its functionalities, including computations in different number fields, are only available when used with the \texttt{sage} software \cite{sage}.
We include a~copy of this program together with all the certificates at \hyperlink{https://arxiv.org/abs/2010.11664}{arXiv:2010.11664}. This copy has additional line in the code, which tells \texttt{sage} to simplify computations made in non-rational fields.

In order to verify the obtained upper bound using \texttt{inspect\_certificate.py}, assuming the certificate file (for instance \texttt{graph14.cert}) is in the same folder as \texttt{inspect\_certificate.py}, one needs to run the following command:

\texttt{sage inspect\_certifate.py graph14.cert -{}-verify-bound}

The checker program \texttt{inspect\_certificate.py} has many other functionalities. In particular, it can display problem description, used graphs, flags, or matrices in a human-readable form. A~detailed explanation of the contents of certificates and how the checker program works can be found in \cite{FV}.

\end{document}